        \pgfplotsset{compat = 1.3}
        \pgfplotsset{minor grid style={dotted}} \pgfplotsset{major grid
        style={dashed}}
        \pgfplotsset{every x tick label/.append style={font=\footnotesize,
        yshift=0.25ex}}
        \pgfplotsset{every y tick label/.append
        style={font=\footnotesize, xshift=0.25ex}}
\definecolor{colorclassyorange}{rgb}{0.95000,0.32500,0.09800}
\definecolor{colorclassyblue}{rgb}{0.00000,0.44706,0.74118}
\definecolor{colorpurple}{rgb}{0.49400,0.18400,0.55600}
\definecolor{colorimag}{rgb}{0.00000,0.49804,0.00000}
\newtheorem{remark}{Remark}
\numberwithin{equation}{section}
\begin{document}

\title{Exponentially fitted methods with a local energy conservation~law}

\author{Dajana Conte, Gianluca Frasca-Caccia\footnote{Corresponding author: gfrascacaccia@unisa.it}}
\date{\normalsize Department of Mathematics, University of Salerno,\\ Via Giovanni Paolo II n. 132, 84084 Fisciano (SA), Italy}
\maketitle

\abstract A new exponentially fitted version of the Discrete Variational Derivative method for the efficient solution of  oscillatory complex  Hamiltonian Partial Differential Equations is proposed. When applied to the nonlinear Schr\"odinger equation, the new scheme has discrete conservation laws of charge and energy. The new method is compared with other conservative schemes from the literature on a benchmark problem whose solution is an oscillatory breather wave.

\section{Introduction}
Let us consider a Hamiltonian Partial Differential Equation (PDE) for a real or complex variable $z=z(x,t)$ in the form
\begin{equation}\label{genHam}
\frac{\partial z}{\partial t}=\mathcal{J}\frac{\delta \mathcal H}{\delta z^*},
\end{equation}
where $z^*$ is the complex conjugate of $z$, if $z\in\mathbb{C}$, or $z^*=z$, if $z\in\mathbb{R}$, $\mathcal{J}$ is a skew-adjoint operator independent of $z$, and $\mathcal{H}$ is a Hamiltonian functional,
\begin{equation}\label{Ham}
\mathcal{H}=\int H(z,z_x) \mathrm{d}x,
\end{equation}
where $H$ is a real local energy function. The operator on the right hand side of \eqref{genHam} is the variational derivative of $\mathcal{H}$ defined by the Euler-Lagrange expression. When applied to the functional \eqref{Ham} it reduces to
\begin{equation}\label{varder}
\frac{\delta \mathcal{H}}{\delta z^*}=\frac{\partial H}{\partial z^*}-\frac{\mathrm d}{\mathrm d x}\frac{\partial H}{\partial z_x^*}.
\end{equation}
When $z$ is a complex variable, equation \eqref{genHam} is typically complemented by its complex conjugate equation,
$$\frac{\partial z^*}{\partial t}=\mathcal{J}^*\frac{\delta \mathcal H}{\delta z}.$$
However, for real-valued Hamiltonians these two equations are equivalent and the latter can be dropped without loss of information \cite{Bodu2}.

The study of Hamiltonian PDEs has attracted the attention of numerical analysts for decades, and a wide range of numerical methods with the property of conserving invariants of the continuous problem has been developed.

Methods that conserve global invariants are usually preferable on one hand because this is a property of the exact solutions that is desirable to preserve. On the other hand, for their superior accuracy over long times. In fact, while for non conservative methods the solution error grows quadratically in time, this drift is only linear for conservative methods \cite{Frut,Duran1,Duran2}.

An invariant that all Hamiltonian PDEs have, is the Hamiltonian functional $\mathcal{H}$ itself.
Numerical methods that conserve the Hamiltonian can be obtained by applying a space discretization that defines a system of ODEs whose Hamiltonian function approximates functional $\mathcal{H}$. An energy conserving method for ODEs is then applied for the time discretization. Popular techniques to derive energy-conserving time integrators include line integral methods \cite{Limbook,Brugnano2019,BruMon} and discrete gradient methods \cite{Celle,Dahlby,McLachlan,Rob,Gonz}.  One of the most studied energy-conserving methods is the Average Vector Field (AVF) method and it can be derived from both these two approaches. The AVF method was first introduced in \cite{Quisp}, and despite its simplicity has important properties of linear covariance and preservation of linear symmetry \cite{Celle}. 

A different technique to derive energy conserving methods for Hamiltonian PDEs is the Discrete Variational Derivative method. In this approach a discrete counterpart of the variational derivative is applied to a space approximation of the Hamiltonian functional, yielding a scheme that conserves the semidiscrete energy \cite{Furi,furibook,Matsuo,Mats}. 

The conservation of the Hamiltonian, such as of any other global invariant of a PDE, is obtained from the integration in space of a local conservation law provided that the boundary conditions assigned to the problem satisfy suitable conservative assumptions (e.g., periodicity). Conservation laws are total divergences,
\begin{equation}\label{CLAw}
D_x F+D_t G,
\end{equation}
that vanish when evaluated on solutions of the PDE. Functions $F$ and $G$ are called flux and density, respectively, and may depend on the independent variables, the dependent variable and its partial derivatives.

Since conservation laws are local properties, a numerical method must satisfy stronger constraints to preserve them. Moreover, they hold true on any smallest part of the domain and are satisfied by the solutions of the differential equation regardless of the boundary conditions. 

McLachlan and Quispel have proved that discrete gradient methods preserve the energy conservation law of the space discretization, if any \cite{McLachlan}.
More recently, a strategy to derive in a systematic way bespoke finite difference schemes that preserve multiple conservation laws has been proposed in \cite{Frasca-Caccia2021,IMA} and used in \cite{Frasca-Caccia2021,IMA,mKdV,AMC} to obtain methods with local conservation laws of energy and of mass or charge. 

Although all these integrators typically perform better than standard methods, they require very small stepsizes in order to correctly reproduce the oscillations of a highly oscillatory solution.

When the oscillatory behaviour of the solution is known a priori, exponentially fitted (EF) methods can be used to solve the problem in an accurate and efficient way. EF methods are obtained by requiring exactness for functions that belong to a specific fitting space, whose choice depends on the expected behaviour of the solution \cite{Pater,Ixaru}. For example, a method that is exact for all functions in the space generated by
$$\{\cos(\omega t),\sin(\omega t)\},$$
is expected to approximate periodic solutions that oscillate with frequency $\omega$ better than a standard method, particularly for large values of $\omega$ \cite{Miya}.  The chance of making a convenient choice of the fitting space is based on the prior knowledge of the frequency of oscillation, $\omega$.  However, when unknown, the frequency can be estimated by using one of the many approaches suggested in literature \cite{DEP,VIM,VanDaele}.

Exponential fitting techniques have been successfully used to solve problems of very different nature, such as fractional differential equations \cite{BCDP}, quadrature \cite{CPquad,EW,ConteJCAM}, interpolation \cite{MVB}, time and space integrators for ODEs \cite{Conte1,Simos,Conte2} and PDEs \cite{CFC,DP,Card}, integral equations \cite{CIP,CIPS}, boundary value problems \cite{VanD}.

This paper focuses on schemes that have a local conservation law of the energy, and that are an EF version of the AVF method or the DVD method. An EF version of the AVF method has been introduced by Miyatake in \cite{Miya}. We show that this method has the same local energy conservation law of the classic AVF method.

For many important Hamiltonian PDEs (e.g., Korteweg de Vries equation) the AVF method and the DVD method lead to the same schemes \cite{Dahlby}. We show that when they are applied to the nonlinear Schr\"odinger (NLS) equation, they yield two different schemes. Therefore, we propose a new EF version of the DVD method in \cite{Matsuo} for complex Hamiltonian evolution equations in the form
\begin{equation}\label{HamPDE}
\frac{\partial z}{\partial t}=-\mathrm{i}\frac{\delta \mathcal H}{\delta z^*},\qquad (x,t)\in(a,b)\times(0,T).
\end{equation}
The new EF DVD method and the standard DVD method applied to \eqref{HamPDE} conserve the same global energy.

We apply the AVF method, the DVD method, and their EF versions to the NLS equation and demonstrate that although they are all different schemes, they all conserve the same local conservation law of the Hamiltonian. 

Moreover, the DVD method and the EF DVD method have also a local conservation law of charge. Although these conservation laws are different, they imply conservation of the same discrete global charge  when the boundary conditions are conservative.

With these premises, this paper is organised as follows. In Section~\ref{DVDsec} we first describe the DVD method in \cite{Matsuo} for complex Hamiltonian PDEs \eqref{HamPDE}. Then we introduce the new EF version of this method, showing that both schemes conserve the same semidiscrete global energy. In Section~\ref{AVFsec} we describe the AVF method in \cite{Quisp} for equation \eqref{genHam} and its EF version introduced in \cite{Miya}, and we show that these methods have the same local conservation law of the energy. In Section~\ref{NLSsec} we apply all these methods to the NLS equation, and we give explicit expressions of their conservation laws and of their invariants. In Section~\ref{testsec} a highly oscillatory breather wave solution of the NLS equation is taken as a benchmark problem to test the properties of convergence and conservation of the considered schemes and to compare their accuracy. Finally, we draw some conclusive remarks in Section~\ref{concl}.
\section{Discrete Variational Derivative method}\label{DVDsec}
We begin this section by defining the discrete operators that are used throughout this paper. We first introduce a uniform grid with nodes
\begin{align*}
x_m=&\,a+(m-1)\Delta x,\qquad m=1,\ldots,M,\qquad \Delta x=\frac{b-a}{M-1},\\
t_n=&\,n\Delta t,\qquad n=0,\ldots,N,\qquad \Delta t=\frac{T}{N},
\end{align*}
and the vectors $\mathbf{Z}\in\mathbb{R}^M$ and $\mathbf{z}_n$ of the approximations
$$Z_m(t)\approx z(x_m,t),\qquad t\in(0,T),\qquad (\mathbf{z}_n)_m=z_{m,n}\approx z(x_m,t_n),$$
respectively. Moreover, we define the difference operators
$$\delta_m^+Z_m=\frac{Z_{m+1}-Z_m}{\Delta x},\qquad \delta_m^-Z_m=\frac{Z_{m}-Z_{m-1}}{\Delta x},	\qquad \delta_m^{(2)}Z_m=\frac{Z_{m+1}-2Z_m+Z_{m-1}}{\Delta x^2},$$
acting similarly on the first index when applied to $z_{m,n}$, and the time difference operator and average operators,
$$\delta_n^+z_{m,n}=\frac{z_{m,n+1}-z_{m,n}}{\Delta t},\qquad \mu_nz_{m,n}=\frac{z_{m,n+1}+z_{m,n}}{2},\qquad \mu_mz_{m,n}=\frac{z_{m+1,n}+z_{m,n}}{2}.$$
As in \cite{Matsuo}, we introduce the DVD method assuming that the local energy of equation \eqref{HamPDE} is of the form
$$H(z,z_x)=\sum_\ell |f_\ell(z)|^{p_\ell} |g_\ell(z_x)|^{q_\ell}.$$
However, the method can be defined for problems whose Hamiltonian function involves higher order derivatives \cite{Matsuo}. 
Let be $\widetilde H(\mathbf{Z})$ the vector whose $m$-th entry is a space approximation of $H$ at $x=x_m$ in the form
\begin{equation}\label{locHam}
\widetilde H(\mathbf{Z})_m=\sum_\ell P_\ell(Z_m)Q^+_\ell(Z_m)Q^-_\ell(Z_m)\approx H(z,z_x)\vert_{x=x_m},
\end{equation}
where
\begin{equation}\label{PQ}
P_\ell(Z_m)=|f_\ell(Z_m)|^{p_\ell},\qquad Q_\ell^+(Z_m)=|g_\ell^+(\delta_m^+Z_m)|^{q^+_\ell},\qquad Q_\ell^-(Z_m)=|g_\ell^-(\delta_m^-Z_m)|^{q^-_\ell},
\end{equation}
functions $f_\ell,$ $g_\ell^+$ and $g_\ell^-$ are analytic and  $p_\ell,q_\ell,q_\ell^+,q_\ell^-\in\{2,3,\ldots\}$.
The DVD method approximates equation \eqref{HamPDE} as
\begin{equation}\label{DVDgen}
\delta_n^+z_{m,n}=-\mathrm{i}\mathcal{F}(\mathbf{z}_n,\mathbf{z}_{n+1})_m,
\end{equation}
where
\begin{equation}\label{mathF}
\mathcal{F}(\mathbf{z}_n,\mathbf{z}_{n+1})_m=\left(\frac{\delta \widetilde H}{\delta(\mathbf{z_{n+1}^*},\mathbf{z_{n}^*})}\right)_m
\end{equation}
is a discrete approximation of the variational derivative at time $t=t_n$. Function $\mathcal{F}(\mathbf{a},\mathbf{b})$ is continuous for any value of $(\mathbf{a},\mathbf{b})$ \cite{Dahlby} and is defined as
\begin{equation}\label{discVD}
\left(\frac{\delta \widetilde H}{\delta(\mathbf{a},\mathbf{b})}\right)_m=\left(\frac{\partial \widetilde H}{\partial(\mathbf{a},\mathbf{b})}\right)_m\!\!\!\!-\delta_m^-\left(\frac{\partial \widetilde H}{\partial\delta^+(\mathbf{a},\mathbf{b})}\right)_m\!\!\!\!-\delta_m^+\left(\frac{\partial \widetilde H}{\partial\delta^-(\mathbf{a},\mathbf{b})}\right)_m,
\end{equation}
where the operators at the right hand side are given by
\begin{align}\label{op1}
\left(\frac{\partial \widetilde H}{\partial(\mathbf{a},\mathbf{b})}\right)_{\!m}\!\!\!=&\left(\frac{Q_\ell^+(\mathbf{a}_m)Q_\ell^-(\mathbf{a}_m)+Q_\ell^+(\mathbf{b}_m)Q_\ell^-(\mathbf{b}_m)}2\right)\!\left(\frac{f_\ell(\mathbf{a}_m)-f_\ell(\mathbf{b}_m)}{\mathbf{a}_m-\mathbf{b}_m}\right)\rho_1,\\\label{op2}
\left(\frac{\partial \widetilde H}{\partial\delta^+(\mathbf{a},\mathbf{b})}\right)_{\!m}\!\!\!=&\left(\frac{P_\ell(\mathbf{a}_m)+P_\ell(\mathbf{b}_m)}2\right)\!\left(\frac{Q_\ell^-(\mathbf{a}_m)+Q_\ell^-(\mathbf{b}_m)}2\right)\!\left(\frac{g_\ell^+(\delta_m^+\mathbf{a}_m)-g_\ell^+(\delta_m^+\mathbf{b}_m)}{\delta_m^+\mathbf{a}_m-\delta_m^+\mathbf{b}_m}\right)\rho_2,\\\label{op3}
\left(\frac{\partial \widetilde H}{\partial\delta^-(\mathbf{a},\mathbf{b})}\right)_{\!m}\!\!\!=&\left(\frac{P_\ell(\mathbf{a}_m)+P_\ell(\mathbf{b}_m)}2\right)\!\left(\frac{Q_\ell^+(\mathbf{a}_m)+Q_\ell^+(\mathbf{b}_m)}2\right)\!\left(\frac{g_\ell^-(\delta_m^+\mathbf{a}_m)-g_\ell^-(\delta_m^+\mathbf{b}_m)}{\delta_m^-\mathbf{a}_m-\delta_m^-\mathbf{b}_m}\right)\rho_3,
\end{align}
respectively, with
\begin{align*} 
\rho_1=&\,\rho(p_\ell;f_\ell(\mathbf{a}_m),f_\ell(\mathbf{b}_m)),\\ 
\rho_2=&\,\rho(q_\ell^+;g_\ell^+(\delta^+_m\mathbf{a}_m),g_\ell^+(\delta^+_m\mathbf{b}_m)),\\
\rho_3=&\,\rho(q_\ell^-;g_\ell^-(\delta^-_m\mathbf{a}_m),g_\ell^-(\delta^-_m\mathbf{b}_m)),
\end{align*}
and
$$\rho(k;s_1,s_2)=
\begin{cases} 
\displaystyle{
\frac{s_1^*+s_2^*}2(|s_1|^{k-2}+|s_1|^{k-4}|s_2|^2+\ldots+|s_2|^{k-2})} & \text{if\,\,} k \text{\,\,even,}\\
\displaystyle{\frac{s_1^*+s_2^*}2\frac{|s_1|^{k-1}+|s_1|^{k-2}|s_2|^2+\ldots+|s_2|^{k-1}}{|s_1|+|s_2|}} & \text{if\,\,} k \text{\,\,odd.}
\end{cases} 
$$
Method \eqref{DVDgen}--\eqref{op3} is second order accurate in space and time and when it is complemented by suitable boundary conditions, for example periodic, it conserves the semidiscrete global energy \cite{Matsuo}
\begin{equation}\label{SDHam}
\widetilde{\mathcal{H}}(\mathbf{Z})=\Delta x\sum_m \widetilde H(\mathbf{Z})_m.
\end{equation}
\subsection{Exponentially fitted Discrete Variational Derivative method}
Here we derive an exponentially fitted version of the DVD method \eqref{DVDgen}--\eqref{op3} following an approach that has been similarly used in \cite{DP} for approximating the space derivatives of a diffusion equation. Assuming that the solution of \eqref{HamPDE} is smooth, the continuity of the discrete variational derivative \eqref{discVD} implies that in the limit $\Delta t\rightarrow 0$, method \eqref{DVDgen}--\eqref{op3} converges to the system of ODEs,
\begin{equation}\label{ODEsys}
\mathbf{Z}'(t)=-\mathrm i\mathcal{F}(\mathbf{Z}(t),\mathbf{Z}(t)),
\end{equation}
where the function at the right hand side is well defined due to the smoothness of functions $f_\ell$, $g_\ell^+$ and $g_\ell^-$. 

If the solution of \eqref{ODEsys} oscillates with frequency $\omega$, we look for an approximation of the time derivative at the left hand side of \eqref{ODEsys} requiring that it is exact when the solution belongs to the fitting space $\Theta$ generated by the basis
\begin{equation}\label{basis}
\mathcal{B}_\Theta=\{1,\cos(\omega t),\sin(\omega t)\}.
\end{equation}
 We start from the truncated Taylor expansions
\begin{align*}
u(t+\Delta t)=&\,u(t+\tfrac{\Delta t}2)+\tfrac{\Delta t}2u'(t+\tfrac{\Delta t}2)+\mathcal{O}(\Delta t^2),\\
u(t)=&\,u(t+\tfrac{\Delta t}2)-\tfrac{\Delta t}2u'(t+\tfrac{\Delta t}2)+\mathcal{O}(\Delta t^2),
\end{align*}
yielding
\begin{equation}\label{trunc}
u(t+\Delta t)-u(t)=\Delta t u'(t+\tfrac{\Delta t}2)+\mathcal{O}(\Delta t^2).
\end{equation}
Moreover, equations
\begin{align*}
\Delta t u'(t+\tfrac{\Delta t}2)=&\,{\Delta t} u'(t)+\mathcal{O}(\Delta t^2),\\
\Delta t u'(t+\tfrac{\Delta t}2)=&\,{\Delta t} u'(t+\Delta t)+\mathcal{O}(\Delta t^2),
\end{align*}
and \eqref{trunc} imply that
\begin{equation}\label{standard}
u(t+\Delta t)-u(t)=\tfrac{\Delta t}2 (u'(t+\Delta t)+u'(t))+\mathcal{R}(\Delta t^2).
\end{equation}
The remainder $\mathcal{R}(\Delta t^2)=\mathcal{O}(\Delta t^2)$ when $u$ is a generic function, and it is zero if $u$ belongs to the function space generated by the set $\{1,t,t^2\}$. Our goal is to suitably modify equation \eqref{standard} in order to obtain a formula such that $\mathcal{R}(\Delta t^2)=0$ when evaluated on functions $u\in\Theta$. In particular, we look for a formula of the type
\begin{equation}\label{ef1st}
\alpha u(t+\Delta t)+\beta u(t)=\tfrac{\Delta t}2 (u'(t+\Delta t)+u'(t)),
\end{equation}
that holds true for all $u\in\Theta$ and for two real coefficients $\alpha$ and $\beta$ to be determined. These two parameters are determined by requiring exactness of formula \eqref{ef1st} for all $u\in\mathcal{B}_\Theta$. Substituting $u(t)=1$ in \eqref{ef1st} implies $\alpha=-\beta$. Requiring exactness of \eqref{ef1st} for both $u(t)=\sin(\omega t)$ and $u(t)=\cos(\omega t)$ is equivalent as solving \eqref{ef1st} for $u(t)=\mathrm{e}^{\mathrm{i}\omega t}$, i.e.,
$$\alpha (\mathrm{e}^{\mathrm{i}\omega (t+\Delta t)}-\mathrm{e}^{\mathrm{i}\omega t})=\frac{\mathrm{i} \omega \Delta t}2\{\mathrm{e}^{\mathrm{i}\omega (t+\Delta t)}+\mathrm{e}^{\mathrm{i}\omega t}\},$$
or equivalently,
$$
 \alpha (\mathrm{e}^{\mathrm{i}\omega \Delta t}-1)=\frac{\mathrm{i} \omega \Delta t}2\{\mathrm{e}^{\mathrm{i}\omega \Delta t}+1\},
$$
that yields
\begin{equation}\label{alpha}
\alpha=\frac{\omega \Delta t(1+\cos(\omega \Delta t))}{2\sin(\omega \Delta t)}.
\end{equation}
Therefore, with this approximation of the time derivative, the exponentially fitted version of the DVD method \eqref{DVDgen} proposed in this paper is given by
\begin{equation}\label{EFDVD}
\alpha\delta_n^{+}z_{m,n}=-\mathrm{i}\mathcal{F}(\mathbf{z}_n,\mathbf{z}_{n+1})_m
\end{equation}
with $\alpha$ defined in \eqref{alpha}.
\begin{theorem}
Under suitable boundary conditions, such as periodic, the EF DVD method \eqref{EFDVD} conserves the semidiscrete global energy \eqref{SDHam}.
\end{theorem}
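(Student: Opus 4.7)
The plan is to reuse the energy-conservation argument of the standard DVD method \eqref{DVDgen} given in \cite{Matsuo} and to observe that the exponentially fitted correction enters only at the very last algebraic step and through the real scalar $\alpha$. Since the EF DVD scheme \eqref{EFDVD} differs from \eqref{DVDgen} only through multiplication of the left-hand side by the real coefficient $\alpha$ defined in \eqref{alpha}, the algebraic skeleton of the proof carries over essentially unchanged.

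First I would compute the one-step variation $\widetilde{\mathcal H}(\mathbf z_{n+1})-\widetilde{\mathcal H}(\mathbf z_n)$ using the explicit form \eqref{locHam}--\eqref{PQ} of $\widetilde H$ together with the discrete product/chain rules encoded in \eqref{op1}--\eqref{op3}. The term-by-term difference of $\widetilde H$ splits into contributions of $P_\ell$-type, $Q_\ell^+$-type and $Q_\ell^-$-type factors. Summing over $m$ and applying discrete summation by parts to the $\delta_m^\pm$ operators, the boundary contributions vanish under the periodic (or otherwise conservative) boundary conditions, and the remaining terms reassemble into precisely the discrete variational derivative \eqref{discVD}. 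This part of the calculation does not involve the time-stepping scheme, so it is identical to the one carried out in \cite{Matsuo} and yields the key identity
\begin{equation*}
\widetilde{\mathcal H}(\mathbf z_{n+1})-\widetilde{\mathcal H}(\mathbf z_n)=\Delta x\sum_m\Bigl[\mathcal F(\mathbf z_n,\mathbf z_{n+1})_m\,(z^*_{m,n+1}-z^*_{m,n})+\mathcal F(\mathbf z_n,\mathbf z_{n+1})_m^{\,*}\,(z_{m,n+1}-z_{m,n})\Bigr].
\end{equation*}

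Next I would substitute the EF DVD scheme \eqref{EFDVD}. Since $\alpha$ in \eqref{alpha} is real, complex conjugation commutes with division by $\alpha$, and the scheme together with its conjugate give
\begin{equation*}
z_{m,n+1}-z_{m,n}=-\tfrac{\mathrm i\Delta t}{\alpha}\,\mathcal F(\mathbf z_n,\mathbf z_{n+1})_m,\qquad z^*_{m,n+1}-z^*_{m,n}=\tfrac{\mathrm i\Delta t}{\alpha}\,\mathcal F(\mathbf z_n,\mathbf z_{n+1})_m^{\,*}.
\end{equation*}
Plugging these into the bracket collapses each term of the sum to $(\mathrm i\Delta t/\alpha)|\mathcal F(\mathbf z_n,\mathbf z_{n+1})_m|^2+(-\mathrm i\Delta t/\alpha)|\mathcal F(\mathbf z_n,\mathbf z_{n+1})_m|^2\equiv 0$, yielding $\widetilde{\mathcal H}(\mathbf z_{n+1})=\widetilde{\mathcal H}(\mathbf z_n)$, which is the desired conservation.

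The main obstacle is the discrete chain-rule plus summation-by-parts identity that produces the variational derivative on the right-hand side of the energy difference; it relies on the carefully tailored averages and difference quotients appearing in \eqref{op1}--\eqref{op3}, but it is exactly the content of the standard DVD conservation proof in \cite{Matsuo} and transfers verbatim because it is independent of the time discretization. The exponentially fitted modification is compatible with energy conservation precisely because $\alpha\in\mathbb R$: this ensures that the two contributions in the bracket carry opposite imaginary signs and cancel identically, independently of the numerical value taken by $\alpha$ as a function of the fitting frequency $\omega$ and the step $\Delta t$.
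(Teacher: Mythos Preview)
Your proposal is correct and follows essentially the same route as the paper: you expand $\widetilde{\mathcal H}(\mathbf z_{n+1})-\widetilde{\mathcal H}(\mathbf z_n)$ via the discrete product rules \eqref{op1}--\eqref{op3}, sum by parts to recover the discrete variational derivative, and then substitute the scheme \eqref{EFDVD} and its conjugate to obtain the cancellation $\tfrac{\mathrm i\Delta t}{\alpha}|\mathcal F_m|^2-\tfrac{\mathrm i\Delta t}{\alpha}|\mathcal F_m|^2=0$. The paper makes exactly the same argument, explicitly noting (as you do) that the only new ingredient relative to \cite{Matsuo} is the real scalar $\alpha$, which passes harmlessly through the conjugation and the final cancellation.
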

\begin{proof}
The proof follows along similar lines as the one that in \cite{Matsuo} shows that method \eqref{DVDgen} conserves \eqref{SDHam}. In fact,
given the definitions \eqref{locHam}, \eqref{op1}--\eqref{op3}, and repeatedly applying the equality $$\tfrac{1}2(\beta_1-\beta_2)(\beta_3+\beta_4)+\tfrac{1}2(\beta_1+\beta_2)(\beta_3-\beta_4)=\beta_1\beta_3-\beta_2\beta_4,\qquad \beta_1,\beta_2,\beta_3,\beta_4\in\mathbb{C},$$
one has
\begin{align}\label{diffH}
\widetilde{\mathcal H}(\mathbf{z_{n+1}})&\,-\widetilde{\mathcal H}(\mathbf{z_{n}})=\Delta x \sum_m (\widetilde H(\mathbf{z}_{n+1})_m-\widetilde H(\mathbf{z}_{n})_m)\\\nonumber
=&\,\Delta x\sum_m\!\left\{\!\left(\frac{\partial \widetilde{H}}{\partial(\mathbf{z_{n+1}},\mathbf{z_n})}\right)_{\!m}\!\!\!\!({z_{m,n+1}}-z_{m,n})\!+\!\left(\frac{\partial \widetilde{H}}{\partial(\mathbf{z_{n+1}}^*,\mathbf{z_n}^*)}\right)_{\!m}\!\!\!\!({z_{m,n+1}^*}-z_{m,n}^*)\right.\\\nonumber
&+\!\left(\frac{\partial \widetilde{H}}{\partial\delta^+(\mathbf{z_{n+1}},\mathbf{z_n})}\right)_{\!m}\!\!\!\!\delta^+_m({z_{m,n+1}}-z_{m,n})\!+\!\left(\frac{\partial \widetilde{H}}{\partial\delta^+(\mathbf{z_{n+1}}^*,\mathbf{z_n}^*)}\right)_{\!m}\!\!\!\!\delta^+_m({z_{m,n+1}^*}-z_{m,n}^*)\\\nonumber
&\left.+\!\left(\frac{\partial \widetilde{H}}{\partial\delta^-(\mathbf{z_{n+1}},\mathbf{z_n})}\right)_{\!m}\!\!\!\!\delta^-_m({z_{m,n+1}}-z_{m,n})\!+\!\left(\frac{\partial \widetilde{H}}{\partial\delta^-(\mathbf{z_{n+1}}^*,\mathbf{z_n}^*)}\right)_{\!m}\!\!\!\!\delta^-_m({z_{m,n+1}^*}-z_{m,n}^*)\right\}
\end{align}
Summing by parts and assuming that the arising boundary terms vanish, the right hand side of \eqref{diffH} can be equivalently written as
\begin{align}\label{quantity}
\Delta x\sum_m\left\{\left(\frac{\delta \widetilde{H}}{\delta(\mathbf{z_{n+1}},\mathbf{z_n})}\right)_m\!\!\!\! (z_{m,n+1}-z_{m,n})+\left(\frac{\delta \widetilde{H}}{\delta(\mathbf{z_{n+1}^*},\mathbf{z_n}^*)}\right)_m \!\!\!\!(z_{m,n+1}^*-z_{m,n}^*)\right\}
\end{align}
where we have also used definition \eqref{discVD}. Taking into account that $\mathbf{z}_n$ and  $\mathbf{z}_{n+1}$ satisfy \eqref{EFDVD}, definition \eqref{mathF}, 
and observing that 
$$\left(\frac{\delta \widetilde{H}}{\delta(\mathbf{z_{n+1}^*},\mathbf{z_n}^*)}\right)_m\!\!\!\!=\left(\frac{\delta \widetilde{H}}{\delta(\mathbf{z_{n+1}},\mathbf{z_n})}\right)_m^*, $$ 
we can rewrite expression \eqref{quantity} as
\begin{align}
-\mathrm{i}\frac{\Delta t\Delta x}{\alpha}\sum_m\left\{\mathcal{F}(\mathbf{z}_n^*,\mathbf{z}_{n+1}^*)_m\mathcal{F}(\mathbf{z}_n,\mathbf{z}_{n+1})_m-\mathcal{F}(\mathbf{z}_n,\mathbf{z}_{n+1})_m \mathcal{F}(\mathbf{z}^*_n,\mathbf{z}^*_{n+1})_m\right\}=0
\end{align}
Therefore, $$\widetilde{\mathcal{H}}(\mathbf{z_{n+1}})=\widetilde{\mathcal{H}}(\mathbf{z_{n}})$$
follows from \eqref{diffH}.
\end{proof}
\section{Average Vector Field Method}\label{AVFsec}
We introduce here the AVF method for Hamiltonian problems in the form \eqref{genHam}.  
Given a semidiscretization of the Hamiltonian functional \eqref{Ham},
\begin{equation}\label{sdHAVF}
\widetilde{\mathcal{H}}=\Delta x\sum_m \widetilde{H}(\mathbf{Z})_m,
\end{equation}
the AVF method amounts to \cite{Quisp}
\begin{equation}\label{classAVF}
\delta_n^+\mathbf{z}_n=\widetilde{\mathcal{J}}\int_0^1\nabla \widetilde{\mathcal{H}}(\xi\mathbf{z}_{n+1}+(1-\xi)\mathbf{z}_n)\,\mathrm{d}\xi,
\end{equation}
where $\widetilde{\mathcal{J}}$ is a skew-adjoint finite dimensional semidiscretization of $\mathcal{J}$. The AVF method \eqref{classAVF} is second order accurate, and if the boundary conditions are periodic it conserves the semidiscrete global energy \eqref{sdHAVF} \cite{Celle}. More recently it has been proved that, regardless of the specific boundary conditions assigned to the problem, the AVF method preserves the local energy conservation law of the space discretization of \eqref{genHam} \cite{McLachlan,Frasca-Caccia2021}. 

An exponentially fitted version of the AVF method that is exact on functions in the linear space generated by the basis $\mathcal{B}_\Theta$ in \eqref{basis} has been introduced in \cite{Miya} and is defined by
\begin{equation}\label{EFAVF}
\alpha\delta_n^+\mathbf{z}_n=\widetilde{\mathcal{J}}\int_0^1\nabla \widetilde{\mathcal{H}}(\xi\mathbf{z}_{n+1}+(1-\xi)\mathbf{z}_n)\,\mathrm{d}\xi,
\end{equation}
where the parameter $\alpha$ is defined as in \eqref{alpha}. Under suitable assumptions on the boundary conditions, the EF AVF method \eqref{EFAVF} conserves the global energy \eqref{sdHAVF}. The following theorem proves that the energy is conserved locally.
\begin{theorem}\label{theoEFAVF}
The EF AVF method \eqref{EFAVF} has a local energy conservation law.
\end{theorem}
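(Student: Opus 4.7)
The plan is to mirror the derivation of the local energy conservation law for the classical AVF method given in \cite{McLachlan,Frasca-Caccia2021}, tracking how the new scalar factor $\alpha$ enters. The structural observation is that \eqref{EFAVF} differs from \eqref{classAVF} only by the constant $\alpha$ multiplying $\delta_n^+\mathbf{z}_n$, and this constant depends only on $\omega\Delta t$, so it is independent of both $m$ and $n$; hence it can be absorbed into the discrete flux.

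First, I would express the increment of the local semi-discrete energy between two consecutive time levels as a line integral,
\[
\widetilde H(\mathbf{z}_{n+1})_m - \widetilde H(\mathbf{z}_n)_m = \int_0^1 \sum_k \frac{\partial \widetilde H_m}{\partial Z_k}\bigg|_{\xi\mathbf{z}_{n+1}+(1-\xi)\mathbf{z}_n}(z_{k,n+1}-z_{k,n})\,\mathrm{d}\xi,
\]
where $k$ runs over the local stencil $\{m-1,m,m+1\}$ inherited from the dependence of $\widetilde H_m$ on $\delta_m^\pm Z_m$. A discrete summation by parts restricted to this three-point stencil rewrites the right-hand side as $(\nabla\widetilde{\mathcal H})_m\,(z_{m,n+1}-z_{m,n})$ plus a term already in the conservative form $\delta_m^+(\cdot)$, which furnishes the first contribution to the spatial flux.

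Second, I would substitute the EF AVF scheme \eqref{EFAVF} to replace $(z_{m,n+1}-z_{m,n})/\Delta t$ by $\alpha^{-1}\bigl(\widetilde{\mathcal J} \int_0^1 \nabla\widetilde{\mathcal H}\,\mathrm{d}\xi\bigr)_m$. The remaining bulk term then becomes $(\Delta t/\alpha)(\nabla\widetilde{\mathcal H})_m\bigl(\widetilde{\mathcal J}\int \nabla\widetilde{\mathcal H}\,\mathrm{d}\xi\bigr)_m$; exploiting the explicit local representation of the skew-adjoint operator $\widetilde{\mathcal J}$ (typically a central-difference operator), skew-adjointness can be realized pointwise as an additional $\delta_m^+(\cdot)$, providing the rest of the flux. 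Collecting everything and dividing by $\Delta t$ gives the local conservation law
\[
\delta_n^+ G_m^n + \delta_m^+ F_m^n = 0,
\]
with density $G_m^n = \widetilde H(\mathbf{z}_n)_m$ and flux $F_m^n$ equal to the classical AVF flux divided by the constant $\alpha$.

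The main obstacle is the last substitution: one must verify that the pointwise pairing $(\nabla\widetilde{\mathcal H})_m\bigl(\widetilde{\mathcal J} \nabla\widetilde{\mathcal H}\bigr)_m$ is a pure discrete $x$-divergence, rather than merely vanishing after summation over $m$. This is precisely the nontrivial step handled for the classical AVF in \cite{Frasca-Caccia2021,McLachlan} and it depends on the finite-bandwidth skew structure of $\widetilde{\mathcal J}$. Once that step is in place, the scalar $1/\alpha$ introduced by the exponential fitting passes through unchanged, so the EF AVF inherits the local energy conservation law of the classical AVF modulo a rescaling of the flux by $\alpha$.
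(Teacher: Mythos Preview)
Your approach is the same as the paper's: write $\delta_n^+\widetilde H(\mathbf z_n)_m$ as a line integral, sum by parts over the local stencil to peel off a $\delta_m^+\widetilde F$ term, then substitute \eqref{EFAVF}. This does establish a local energy conservation law.

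Where you slip is in the final identification of the flux. The summation-by-parts contribution $\widetilde F$ is produced \emph{before} the scheme is substituted and therefore carries no $\alpha$. After substituting \eqref{EFAVF} the remaining bulk term is
\[
\frac{1}{\alpha}\,\Bigl(\int_0^1\nabla\widetilde{\mathcal H}\,\mathrm d\xi\Bigr)_{\!m}\widetilde{\mathcal J}\,\Bigl(\int_0^1\nabla\widetilde{\mathcal H}\,\mathrm d\xi\Bigr)_{\!m},
\]
and in the setting of the paper (equation~\eqref{HamPDE} and the NLS formulation~\eqref{realHamform}) the operator $\widetilde{\mathcal J}$ is a constant skew-symmetric matrix acting pointwise, not a spatial difference operator as your parenthetical ``typically a central-difference operator'' assumes. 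Hence this quadratic form vanishes at every $m$, the $1/\alpha$ contribution disappears, and the flux of the EF AVF method equals that of the classical AVF method \emph{exactly} --- it is not ``the classical flux divided by $\alpha$''. This $\alpha$-independence is precisely the content of the Remark following the theorem and is what lets the paper conclude in Section~\ref{NLSsec} that all four schemes share the same energy conservation law. Even if $\widetilde{\mathcal J}$ were a difference operator and the pairing produced a nontrivial divergence, only that second piece would carry the factor $1/\alpha$; the summation-by-parts piece never does, so the total flux could not be a uniform rescaling of the classical one in any case.
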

\begin{proof} Following similar steps as those proving the local conservation of the energy of the classical AVF method \eqref{classAVF} in \cite{Frasca-Caccia2021}, we obtain
\begin{align*}
\delta_n^+\widetilde{H}(\mathbf{z}_n)_m=&\,\frac{1}{\Delta t}\int_0^1\!\!\frac{\mathrm{d}}{\mathrm{d}\xi}\widetilde{H}(\xi\mathbf{z}_{n+1}+(1-\xi)\mathbf{z}_n)_m\,\mathrm{d}\xi=\sum_m \delta_n^+z_{m,n}\left.\int_0^1\frac{\partial}{\partial Z_m}\widetilde{H}(\mathbf{Z})\right\vert_{\mathbf{Z}=\xi \mathbf{z}_1+(1-\xi)\mathbf{z}_0}\\
=&\left(\int_0^1\nabla \widetilde{H}(\xi\mathbf{z}_{n+1}+(1-\xi)\mathbf{z}_n)\,\mathrm{d}\xi\right)\delta_n^+\mathbf{z}_n+\delta_m^+\widetilde{F}(\mathbf{z}_n,\mathbf{z}_{n+1})\\
=&\,\frac{1}\alpha\!\int_0^1\!\!\nabla \widetilde{H}(\xi\mathbf{z}_{n+1}+(1-\xi)\mathbf{z}_n)\,\mathrm{d}\xi\,\widetilde{\mathcal{J}}\!\int_0^1\!\!\nabla \widetilde{\mathcal{H}}(\xi\mathbf{z}_{n+1}+(1-\xi)\mathbf{z}_n)\,\mathrm{d}\xi+\delta_m^+\widetilde{F}(\mathbf{z}_n,\mathbf{z}_{n+1})\\
=&\,\delta_m^+\widetilde{F}(\mathbf{z}_n,\mathbf{z}_{n+1}),
\end{align*}
where we omit the expression of the flux. This follows from the application of the summation by parts formula, does not depend on $\alpha$, and can be found in \cite{Frasca-Caccia2021}.
\end{proof}
\begin{remark}
As the conservation law obtained in the proof of Theorem~\ref{theoEFAVF} is independent of $\alpha$, it follows that the solutions of the EF AVF method \eqref{EFAVF} and of the classic AVF method \eqref{classAVF} satisfy the same conservation law.
\end{remark}
\section{Nonlinear Schr\"odinger equation}\label{NLSsec}
The nonlinear Schr\"odinger (NLS) equation for the complex variable $z=u+\mathrm{i}v$,
\begin{equation}\label{compNLS}
\mathrm{i}z_t+z_{xx}+|z|^2z=0,\qquad (x,t)\in(a,b)\times(0,T),
\end{equation}
can be written in Hamiltonian form \eqref{HamPDE}
with Hamiltonian functional \cite{Bodu}
\begin{equation*}
\mathcal{H}=\int H(z,z_x)  \mathrm{d}x = \int \left(|z_x|^2-\tfrac{1}2|z|^4\right) \mathrm{d}x.
\end{equation*} 
Equation \eqref{compNLS} can also be equivalently written as a system of two PDEs for the real variables $u$ and $v$,
\begin{equation}\label{realNLS}
\begin{cases} 
u_t+v_{xx}+(u^2+v^2)v=0,
\\ -v_t+u_{xx}+(u^2+v^2)u={0}.\end{cases}
\end{equation}
System \eqref{realNLS} can be written in Hamiltonian form,
\begin{equation}\label{realHamform}
\left(\begin{array}{c}
u_t\\
v_t
\end{array}\right)=\mathcal{J}\left(\begin{array}{c}
\frac{\delta \mathcal H}{\delta u}\\
\frac{\delta \mathcal H}{\delta v}
\end{array}\right),\qquad \mathcal{J}=\left(\begin{array}{cc}
0 & \tfrac{1}2\\
-\tfrac{1}2 & 0
\end{array}\right),\qquad \mathcal{H}=\int (u_x^2+v_x^2-\tfrac{1}2(u^2+v^2)^2)\,\mathrm{d}x.
\end{equation}
Among the infinitely many conservation laws of the NLS equation we consider here those of the charge and the energy, in the form
\begin{equation}\label{CLAW}
D_x F_\ell+D_t G_\ell=0,\qquad \ell=1,2,
\end{equation}
with
\begin{equation}\label{charge}
F_1=2uv_x-2u_xv,\qquad G_1=u^2+v^2,
\end{equation}
and
\begin{equation}\label{energy} 
F_2=-2u_xu_t-2v_xv_t,\qquad G_2=u^2_x+v^2_x-\frac{1}2(u^2+v^2)^2,
\end{equation}
respectively. 
When suitable boundary conditions, such as periodic, are assigned to system \eqref{realNLS} integration in space of these two conservation laws implies the conservation of the global charge and the global energy
$$\mathcal{M}=\int (u^2+v^2)\,\mathrm{d}x,\qquad \mathcal{H}=\int (u_x^2 + v_x^2 -\tfrac{1}2(u^2+v^2)^2)\,\mathrm{d}x.$$
As shown in \cite{Dahlby}, the two approaches of the AVF method and the DVD method yield the same scheme in many cases. However, when they are applied to the NLS equation two different schemes are obtained. We derive them here separately based on the same definition of the discrete Hamiltonian $\widetilde{H}$ given in \cite{Matsuo}, 
\begin{equation}\label{NLSsdH}
\widetilde{H}(\mathbf Z)_m=\frac{|\delta_m^+Z_m|^2+|\delta_m^-Z_m|^2}2-\frac{1}2|Z_m|^4.
\end{equation}
Setting $Z_m=U_m+\mathrm{i}V_m$, we can rewrite $\widetilde{H}$ equivalently as
\begin{equation}\label{realNLSsdH}
\widetilde{H}(\mathbf U,\mathbf V)_m=\frac{1}2\left\{(\delta_m^+U_{m})^2+(\delta_m^+V_{m})^2+(\delta_m^-U_{m})^2+(\delta_m^-V_{m})^2\right\}-\frac{1}2(U_{m}^2+V_{m}^2)^2.
\end{equation}
\subsection{Discrete Variational Derivative method}
Considering the definition of $\widetilde{H}$ given in \eqref{NLSsdH}, the classical DVD method \eqref{DVDgen} yields the scheme \cite{Matsuo}
\begin{equation}\label{NLSDVD}
\mathrm{i}\delta_n^+z_{m,n}=\delta_m^{(2)}\mu_nz_{m,n}+\mu_n(|z_{n,m}|^2)\mu_nz_{n,m}.
\end{equation}
Setting $z_{m,n}=u_{m,n}+\mathrm{i}v_{m,n}$, method  \eqref{NLSDVD} is equivalent to the following scheme for system \eqref{realNLS}
\begin{equation}\label{classDVD}
\widetilde{A}^{DVD}:=\left(\begin{array}{c}
\delta_n^+u_{m,n}+\delta_m^{(2)}\mu_nv_{m,n}+\mu_n(u_{m,n}^2+v_{m,n}^2)\mu_nv_{m,n}\\
-\delta_n^+v_{m,n}+\delta_m^{(2)}\mu_nu_{m,n}+\mu_n(u_{m,n}^2+v_{m,n}^2)\mu_nu_{m,n}
\end{array}\right)=\mathbf{0}.
\end{equation}
A parametric family of schemes for the NLS equation that have discrete conservation laws of charge and energy has been introduced in \cite{AMC}.  These two discrete conservation laws approximate their continuous counterparts given by \eqref{CLAW} with \eqref{charge} and \eqref{energy}, respectively, and are exactly satisfied by the solutions of the schemes.  As observed in \cite{AMC}, method \eqref{NLSDVD} belongs to this family and its conservation laws are in the form of discrete divergences
\begin{equation}\label{discdiv}
\delta_n^+ \widetilde{G}_\ell + \delta_m^+ \widetilde{F}_\ell, \qquad \ell=1,2,
\end{equation}
that vanish when evaluated on solutions of \eqref{classDVD}. In fact, they can be equivalently written in characteristic form \cite{AMC},
\begin{equation}\label{discCL}
\delta_n^+ \widetilde{G}_\ell + \delta_m^+ \widetilde{F}_\ell=\widetilde{C}_\ell \widetilde{A}^{DVD}, \qquad \ell=1,2,
\end{equation}
with 
\begin{align}\nonumber
\widetilde{F}_1=&\,2(\mu_m\mu_nu_{m-1,n})(\delta_m^-\mu_nv_{m,n})-2(\delta_m^-\mu_nu_{m,n})(\mu_m\mu_nv_{m-1,n}),\\\nonumber
\widetilde{G}_1=&\,u_{m,n}^2+v_{m,n}^2,\qquad \widetilde{C}_1=(2\mu_nu_{m,n},-2\mu_nv_{m,n}),\\\label{FGQ}
\widetilde{F}_2=&\,-2(\delta_m^-\mu_nu_{m,n})(\delta_n^+\mu_mu_{m-1,n})-2(\delta_m^-\mu_nv_{m,n})(\delta_n^+\mu_mv_{m-1,n}),\\\nonumber
\widetilde{G}_2=&\,\tfrac{1}2\left\{(\delta_m^+u_{m,n})^2+(\delta_m^-u_{m,n})^2+(\delta_m^+v_{m,n})^2+(\delta_m^-v_{m,n})^2\right\}-\tfrac{1}2(u_{m,n}^2+v_{m,n}^2)^2,\\\nonumber
\widetilde{C}_2=&\,(-2\delta_n^+v_{m,n},-2\delta_n^+u_{m,n}).
\end{align}
The exponentially fitted version \eqref{EFDVD} of the DVD method \eqref{NLSDVD} is given by
\begin{equation}\label{EFNLSDVD}
\mathrm{i}\alpha\delta_n^+z_{m,n}=\delta_m^{(2)}\mu_nz_{m,n}+\mu_n(|z_{n,m}|^2)\mu_nz_{n,m},
\end{equation}
with $\alpha$ defined according to \eqref{alpha}. With the same notation used in \eqref{classDVD}, method \eqref{EFNLSDVD} is equivalent to
\begin{equation}\label{EFNLSDVDvec}
\widetilde{A}_\alpha^{DVD}:=\left(\begin{array}{c}
\alpha\delta_n^+u_{m,n}+\delta_m^{(2)}\mu_nv_{m,n}+\mu_n(u_{m,n}^2+v_{m,n}^2)\mu_nv_{m,n}\\
-\alpha\delta_n^+v_{m,n}+\delta_m^{(2)}\mu_nu_{m,n}+\mu_n(u_{m,n}^2+v_{m,n}^2)\mu_nu_{m,n}
\end{array}\right)=\mathbf{0}.
\end{equation}
\begin{theorem}\label{theoEFDVD}
The EF DVD method $\widetilde{A}_\alpha^{DVD}$ has discrete conservation laws of charge and energy defined by 
$$\delta_n^+ \widetilde{G}_\ell + \delta_m^+ \widetilde{F}_\ell=\widetilde{C}_\ell \widetilde{A}^{DVD}_\alpha, \qquad \ell=1,2,$$
 with
\begin{equation}\label{EFG1}
\widetilde{G}_1=\alpha(u_{m,n}^2+v_{m,n}^2),
\end{equation}
and functions $\widetilde{F}_1,\widetilde{C}_1,\widetilde{F}_2,\widetilde{G}_2,$ and $\widetilde{C}_2,$ defined as in \eqref{FGQ}.
\end{theorem}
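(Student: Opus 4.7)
The plan is to leverage the fact that the EF DVD scheme $\widetilde{A}_\alpha^{DVD}$ in \eqref{EFNLSDVDvec} differs from the classical DVD scheme $\widetilde{A}^{DVD}$ in \eqref{classDVD} only through the constant $\alpha$ that multiplies the two time-difference entries $\delta_n^+u_{m,n}$ and $-\delta_n^+v_{m,n}$. Since the characteristic form \eqref{discCL} of the two conservation laws for $\widetilde{A}^{DVD}$ is already available from \cite{AMC}, I would reduce the theorem to computing the perturbation $\widetilde{C}_\ell(\widetilde{A}_\alpha^{DVD}-\widetilde{A}^{DVD})$ for $\ell=1,2$ and showing that it either contributes a pure time divergence that can be absorbed into a modified density (for the charge) or vanishes identically (for the energy, so that $\widetilde{G}_2$ and $\widetilde{F}_2$ carry over unchanged).

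For the charge law, I would note that $\widetilde{A}_\alpha^{DVD}-\widetilde{A}^{DVD}=(\alpha-1)(\delta_n^+u_{m,n},-\delta_n^+v_{m,n})^\top$ and contract it with $\widetilde{C}_1=(2\mu_nu_{m,n},-2\mu_nv_{m,n})$. Applying the discrete product rule $2\mu_nw_{m,n}\,\delta_n^+w_{m,n}=\delta_n^+(w_{m,n}^2)$ to each of the resulting two scalar products would yield the perturbation $(\alpha-1)\,\delta_n^+(u_{m,n}^2+v_{m,n}^2)$. Adding this to the known identity $\delta_n^+(u_{m,n}^2+v_{m,n}^2)+\delta_m^+\widetilde{F}_1=\widetilde{C}_1\widetilde{A}^{DVD}$ and using that $\alpha$ is constant in $t$, I would conclude $\delta_n^+[\alpha(u_{m,n}^2+v_{m,n}^2)]+\delta_m^+\widetilde{F}_1=\widetilde{C}_1\widetilde{A}_\alpha^{DVD}$, which confirms the updated density \eqref{EFG1} while leaving $\widetilde{F}_1$ and $\widetilde{C}_1$ untouched.

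For the energy law, the crucial observation is that $\widetilde{C}_2=(-2\delta_n^+v_{m,n},-2\delta_n^+u_{m,n})$ is itself a purely time-difference vector, so its contraction with $(\alpha-1)(\delta_n^+u_{m,n},-\delta_n^+v_{m,n})^\top$ produces only cross products of the form $\delta_n^+u_{m,n}\,\delta_n^+v_{m,n}$ with opposite signs. These cross terms cancel exactly, so $\widetilde{C}_2\widetilde{A}_\alpha^{DVD}=\widetilde{C}_2\widetilde{A}^{DVD}$, and the classical energy relation transfers verbatim to the EF scheme with the same $\widetilde{G}_2$ and $\widetilde{F}_2$ as in \eqref{FGQ}.

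The main obstacle I anticipate is merely bookkeeping: keeping the sign of $-\alpha\delta_n^+v_{m,n}$ in the second component of \eqref{EFNLSDVDvec} straight when performing the cancellation in the energy case, and confirming that the purely spatial and nonlinear contributions to $\widetilde{C}_\ell\widetilde{A}_\alpha^{DVD}$ are identical to those already handled in \cite{AMC} for $\widetilde{A}^{DVD}$ (they are, since $\alpha$ does not touch them). Notably, no summation by parts and no boundary assumption enters the argument, so the conclusion is a genuinely pointwise local conservation law on the grid.
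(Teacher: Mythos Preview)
Your proposal is correct and follows essentially the same approach as the paper: both exploit that $\widetilde{A}_\alpha^{DVD}$ differs from $\widetilde{A}^{DVD}$ only in the factor $\alpha$ on the time-difference terms, then verify that $\widetilde{C}_2$ annihilates this perturbation (giving the unchanged energy law) while $\widetilde{C}_1$ turns it into the pure time divergence $\delta_n^+[\alpha(u_{m,n}^2+v_{m,n}^2)]$ via the product rule $2\mu_n w\,\delta_n^+ w=\delta_n^+(w^2)$. The only cosmetic difference is that you organize the computation as the perturbation $\widetilde{C}_\ell(\widetilde{A}_\alpha^{DVD}-\widetilde{A}^{DVD})$ added to the known identity, whereas the paper expands $\widetilde{C}_\ell\widetilde{A}_\alpha^{DVD}$ directly and isolates the $\alpha$-dependent terms.
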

\begin{proof}
As the only difference between schemes $\widetilde{A}_\alpha^{DVD}$ and $\widetilde{A}^{DVD}$ is the factor $\alpha$ multiplying the forward difference approximations of the time derivative, we only need to investigate how the introduction of this factor effects the conservation laws of $\widetilde{A}^{DVD}$. 

Product $\widetilde{C}_2\widetilde{A}_\alpha^{DVD}$ is not affected by the value of $\alpha$. In fact, expanding it one obtains that
$$-2\alpha(\delta_n^+{v_{m,n}})(\delta_n^+{u_{m,n}})+2\alpha(\delta_n^+{u_{m,n}})(\delta_n^+{v_{m,n}})=0,$$
and there is no other term that depends on $\alpha$. So method $\widetilde{A}_\alpha^{DVD}$ has the same energy conservation law of the classic DVD method $\widetilde{A}^{DVD}$ obtained in \cite{AMC}.

Expanding the product $\widetilde{C}_1\widetilde{A}_\alpha^{DVD}$ one obtains that the parameter $\alpha$ only appears in
$$2\alpha(\mu_nu_{m,n})(\delta_n^+u_{m,n})-2\alpha(\mu_nv_{m,n})(\delta_n^+v_{m,n})=\delta_n^+(\alpha(u_{m,n}^2+v_{m,n}^2)),$$
defining the density of the charge conservation law of the exponentially fitted method as in \eqref{EFG1}. As $\alpha$ does not multiply any other term, the expression of the flux is the same as that of the classic DVD method and it is given by $\widetilde F_1$ in \eqref{FGQ}.
\end{proof}
\begin{remark} 
Under suitable boundary conditions, e.g. periodic, summation in space of the obtained local conservation laws implies that the DVD method \eqref{classDVD} and the EF DVD method \eqref{EFNLSDVDvec} conserve the global charge,
\begin{equation}\label{totch}
\widetilde{\mathcal{M}}_n=\,\Delta x\sum_m (u_{m,n}^2+v_{m,n}^2),\\
\end{equation}
and the global energy,
\begin{equation}\label{totH}
\widetilde{\mathcal{H}}_n=\,\Delta x\sum_m \left\{\tfrac{1}2\left[(\delta_m^+u_{m,n})^2+(\delta_m^-u_{m,n})^2+(\delta_m^+v_{m,n})^2+(\delta_m^-v_{m,n})^2\right]-\tfrac{1}2(u_{m,n}^2+v_{m,n}^2)^2\right\}.
\end{equation}
\end{remark}
\subsection{Average Vector Field method}
With the approximation \eqref{sdHAVF} and \eqref{realNLSsdH} of the Hamiltonian functional, the AVF method \eqref{classAVF} approximates system \eqref{realHamform} as
\begin{equation}\label{NLSAVFvec}
\widetilde{A}^{AVF}:=\left(\begin{array}{c}
\delta_n^+u_{m,n}+\delta_m^{(2)}\mu_nv_{m,n}+\mu_n(v_{m,n}^2+\frac{2}3u_{m,n}^2)\mu_nv_{m,n}+\frac{1}3\mu_n(u_{m,n}^2v_{m,n})\\
-\delta_n^+v_{m,n}+\delta_m^{(2)}\mu_nu_{m,n}+\mu_n(u_{m,n}^2+\frac{2}3v_{m,n}^2)\mu_nu_{m,n}+\frac{1}3\mu_n(v_{m,n}^2u_{m,n})
\end{array}\right)=\mathbf{0}.
\end{equation}
Similarly, the approximation given by the exponentially fitted AVF method \eqref{EFAVF} amounts to
\begin{equation}\label{EFNLSAVFvec}
\widetilde{A}^{AVF}_\alpha:=\left(\begin{array}{c}
\alpha\delta_n^+u_{m,n}+\delta_m^{(2)}\mu_nv_{m,n}+\mu_n(v_{m,n}^2+\frac{2}3u_{m,n}^2)\mu_nv_{m,n}+\frac{1}3\mu_n(u_{m,n}^2v_{m,n})\\
-\alpha\delta_n^+v_{m,n}+\delta_m^{(2)}\mu_nu_{m,n}+\mu_n(u_{m,n}^2+\frac{2}3v_{m,n}^2)\mu_nu_{m,n}+\frac{1}3\mu_n(v_{m,n}^2u_{m,n})
\end{array}\right)=\mathbf{0},
\end{equation}
with $\alpha$ given in \eqref{alpha}.

\begin{theorem}
The classic AVF method \eqref{NLSAVFvec} and the EF AVF method \eqref{EFNLSAVFvec} satisfy the same energy conservation law of the classic DVD method  \eqref{NLSDVD} and of the EF DVD method \eqref{EFNLSDVD}, defined by 
$$\delta_m^+ \widetilde{F}_2+\delta_n^+\widetilde{G}_2=\widetilde{{C}}_2\widetilde{A}^{AVF}=\widetilde{{C}}_2\widetilde{A}^{AVF}_\alpha=\widetilde{{C}}_2\widetilde{A}^{DVD}=\widetilde{{C}}_2\widetilde{A}^{DVD}_\alpha,$$
with functions $\widetilde{F}_2$, $\widetilde{G}_2$ and $\widetilde{{C}}_2$ given in \eqref{FGQ}.
\end{theorem}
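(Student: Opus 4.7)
My plan is to reduce the four claimed equalities to a single one by eliminating the fitting parameter $\alpha$, and then to verify the remaining identity by combining the DVD result from \cite{AMC} with the observation that $\widetilde{A}^{AVF}$ and $\widetilde{A}^{DVD}$ agree on their linear parts and differ only in their cubic nonlinear terms.

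First I would show that the $\alpha$-dependent contributions drop out of both $\widetilde{C}_2\widetilde{A}^{AVF}_\alpha$ and $\widetilde{C}_2\widetilde{A}^{DVD}_\alpha$. The difference $\widetilde{A}^{\bullet}_\alpha-\widetilde{A}^{\bullet}$ equals the vector $((\alpha-1)\delta_n^+u_{m,n},\,-(\alpha-1)\delta_n^+v_{m,n})^T$, so its inner product with $\widetilde{C}_2=(-2\delta_n^+v_{m,n},\,-2\delta_n^+u_{m,n})$ yields $-2(\alpha-1)(\delta_n^+v_{m,n})(\delta_n^+u_{m,n})+2(\alpha-1)(\delta_n^+u_{m,n})(\delta_n^+v_{m,n})=0$. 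This is exactly the cancellation mechanism used in the proof of Theorem~\ref{theoEFDVD} and collapses the four identities to $\widetilde{C}_2\widetilde{A}^{AVF}=\widetilde{C}_2\widetilde{A}^{DVD}=\delta_m^+\widetilde{F}_2+\delta_n^+\widetilde{G}_2$.

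The equality $\widetilde{C}_2\widetilde{A}^{DVD}=\delta_m^+\widetilde{F}_2+\delta_n^+\widetilde{G}_2$ is the $\ell=2$ case of \eqref{discCL}, already proved in \cite{AMC}. Comparing \eqref{classDVD} with \eqref{NLSAVFvec} row by row, the linear parts (the $\delta_n^+$ and $\delta_m^{(2)}\mu_n$ terms) are \emph{identical}, so their contractions with $\widetilde{C}_2$ match and jointly produce $\delta_m^+\widetilde{F}_2$ together with the kinetic portion of $\delta_n^+\widetilde{G}_2$ via exactly the summation-by-parts manipulations of \cite{AMC}. The whole problem therefore reduces to the cubic identity
$$\widetilde{C}_2\cdot\bigl[\text{cubic part of }\widetilde{A}^{AVF}\bigr]\;=\;\delta_n^+\!\bigl[-\tfrac{1}{2}(u_{m,n}^2+v_{m,n}^2)^2\bigr],$$
the right-hand side being the time-difference of the potential part of $\widetilde{G}_2$ and also what the cubic part of $\widetilde{A}^{DVD}$ produces on contraction with $\widetilde{C}_2$, via the elementary identity $2(\delta_n^+ w)(\mu_n w)=\delta_n^+(w^2)$ applied with $w=u_{m,n}$ and $w=v_{m,n}$.

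To verify this cubic identity I would expand the AVF cubic combination $\mu_n(v^2+\tfrac{2}{3}u^2)\mu_n v+\tfrac{1}{3}\mu_n(u^2 v)$ (and the symmetric partner in the second row of \eqref{NLSAVFvec}) with the help of $\mu_n(fg)=\mu_n f\,\mu_n g+\tfrac{\Delta t^2}{4}\delta_n^+f\,\delta_n^+g$, which is precisely the identity that quantifies the gap between AVF and DVD at the cubic level. The $\tfrac{2}{3}$--$\tfrac{1}{3}$ split in \eqref{NLSAVFvec} is then what makes the resulting correction terms combine — after contraction with $\widetilde{C}_2$ and summation with the symmetric contribution from the other row — so as to cancel and leave $-\tfrac{1}{2}\delta_n^+((u^2+v^2)^2)$. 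The main obstacle is the bookkeeping of this expansion; a cleaner alternative is to appeal, at a single grid node, to the AVF chain rule used in the proof of Theorem~\ref{theoEFAVF}, exploiting the fact that the potential $-\tfrac{1}{2}(u_m^2+v_m^2)^2$ is purely pointwise in $\mathbf{Z}$ and hence the chain rule delivers the cubic identity without any further algebra.
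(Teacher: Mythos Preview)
Your argument is correct, and the $\alpha$-elimination step is exactly the cancellation mechanism the paper uses. Where you diverge is in the treatment of the cubic part: you propose to verify directly that $\widetilde{C}_2$ applied to the AVF cubic terms reproduces $\delta_n^+\bigl[-\tfrac12(u^2+v^2)^2\bigr]$, either by expanding with the identity $\mu_n(fg)=\mu_nf\,\mu_ng+\tfrac{\Delta t^2}{4}\delta_n^+f\,\delta_n^+g$ or by invoking the AVF chain rule pointwise. The paper instead bypasses this computation entirely by writing down the \emph{difference} $\widetilde{A}^{AVF}-\widetilde{A}^{DVD}$ in closed form,
\[
\widetilde{A}^{AVF}-\widetilde{A}^{DVD}=\widetilde{A}^{AVF}_\alpha-\widetilde{A}^{DVD}_\alpha=\frac{\Delta t}{6}\bigl(u_{m,n+1}v_{m,n}-u_{m,n}v_{m,n+1}\bigr)\begin{pmatrix}\delta_n^+u_{m,n}\\[-2pt]-\delta_n^+v_{m,n}\end{pmatrix},
\]
and observing that $\widetilde{C}_2=(-2\delta_n^+v_{m,n},-2\delta_n^+u_{m,n})$ annihilates the vector $(\delta_n^+u_{m,n},-\delta_n^+v_{m,n})^T$ for the same reason it kills the $\alpha$-terms. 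This is a single line, whereas your route requires tracking the $\tfrac23/\tfrac13$ split through the expansion. Your chain-rule alternative at the end is sound and would also short-circuit the bookkeeping, but the paper's difference trick is the most economical: it never asks what $\widetilde{C}_2\cdot[\text{AVF cubic}]$ equals, only that it equals $\widetilde{C}_2\cdot[\text{DVD cubic}]$, which is already known from \eqref{discCL}.
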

\begin{proof}
The statement can be proved by expanding the calculations outlined in Theorem \ref{theoEFAVF}. However, we instead evaluate the differences 
$$\widetilde{A}^{AVF}-\widetilde{A}^{DVD}=\widetilde{A}^{AVF}_\alpha-\widetilde{A}^{DVD}_\alpha=\frac{\Delta t}6\left(\begin{array}{c}
\delta_n^+u_{m,n}(u_{m,n+1}v_{m,n}-u_{m,n}v_{m,n+1})\\
-\delta_n^+v_{m,n}(u_{m,n+1}v_{m,n}-u_{m,n}v_{m,n+1})
\end{array}\right).
$$
Since, with $\widetilde C_2$ defined in \eqref{FGQ},
$$\widetilde{C}_2\left(\begin{array}{c}
\delta_n^+u_{m,n}(u_{m,n+1}v_{m,n}-u_{m,n}v_{m,n+1})\\
-\delta_n^+v_{m,n}(u_{m,n+1}v_{m,n}-u_{m,n}v_{m,n+1})
\end{array}\right)=\mathbf{0},$$
it follows from \eqref{discCL} and Theorem \ref{theoEFDVD} that 
$$\delta_m^+ \widetilde{F}_2+\delta_n^+ \widetilde{G}_2=\widetilde{C}_2 \widetilde{A}^{DVD}=\widetilde{C}_2 \widetilde{A}^{DVD}_\alpha=\widetilde{C}_2 \widetilde{A}^{AVF}=\widetilde{C}_2 \widetilde{A}^{AVF}_\alpha,$$
with $\widetilde{G}_2$ and $\widetilde{F}_2$ given in \eqref{FGQ}.
\end{proof}
\section{Numerical tests}\label{testsec}
As a benchmark problem to compare the numerical methods described in this paper and to test their conservative properties, we consider here the breather solution \cite{AEK},
\begin{figure}[tbp]
\begin{center}
\includegraphics[width=.32\textwidth,height=5cm]{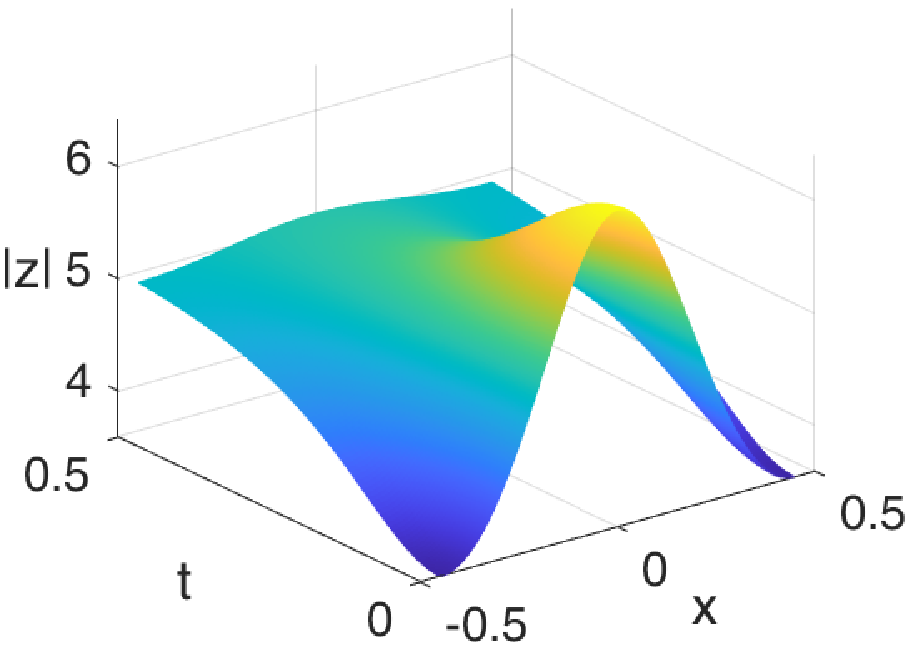}
\includegraphics[width=.32\textwidth,height=5cm]{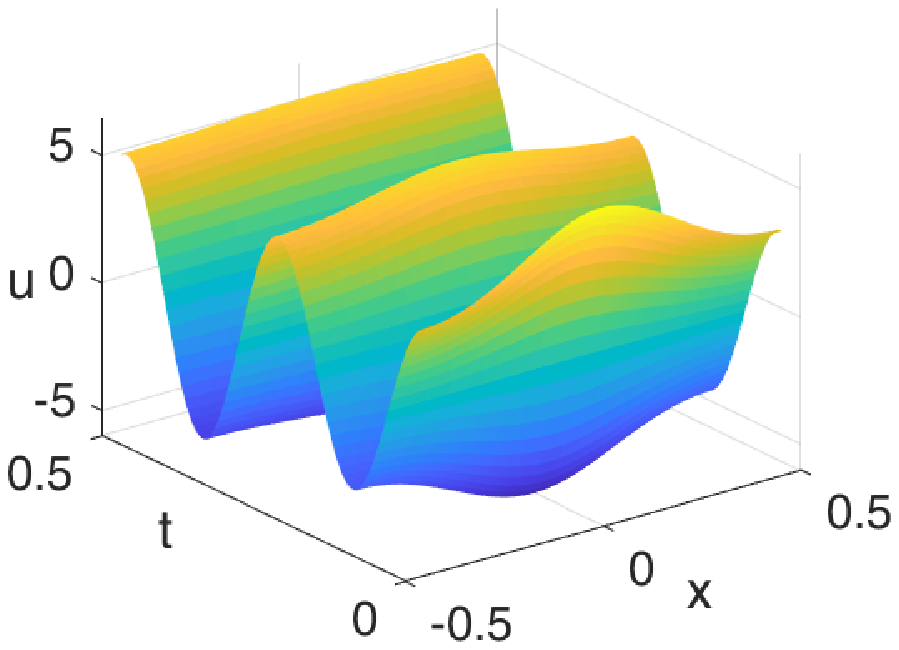}
\includegraphics[width=.32\textwidth,height=5cm]{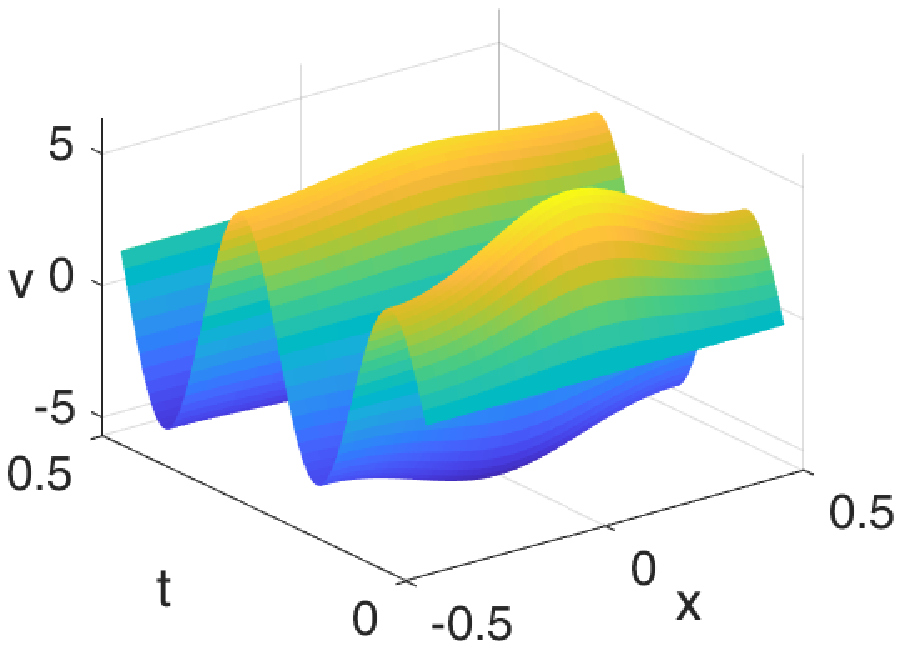}
\end{center}
	\caption{Solution of the breather problem for NLS: $|z(x,t)|$ (left), $u(x,t)$ (centre), $v(x,t)$ (right)}
	\label{fig:NLSbr}
\end{figure} 
\begin{align}\label{exbr}
z(x,t)&=\left(\frac{2\beta^2\cosh\theta+2\mathrm{i}\beta\sqrt{2-\beta^2}\sinh\theta}{2\cosh\theta-\sqrt{4-2\beta^2}\cos(\sqrt\omega\beta x)}-1\right)\sqrt\omega\mathrm{e}^{\mathrm{i}\omega t},\quad \theta=\omega\beta\sqrt{2-\beta^2}t,\quad\beta<\sqrt{2},\\\nonumber
u(x,t)&=\operatorname{Re}(z),\qquad v(x,t)=\operatorname{Im}(z). 
\end{align}
As in \cite{CFC}, we consider the restriction of this solution to the domain $(x,t)\in [-\pi/7,\pi/7]\times[0,0.5]$ and we set $\beta=1.4$. Figure~\ref{fig:NLSbr} shows a graph of the exact solution. The initial condition is obtained from formula \eqref{exbr} evaluated at $t=0$. The frequency of oscillation of $u$ and $v$ is given by $\omega$ and it can be derived from the initial condition.  We set here $\omega=25$. The numerical methods are solved on uniform grids defined by $\Delta x = 2\pi/7000,$ and $\Delta t_k = 0.01/2^k,\, k = 0, \ldots , 5$.

As the computational cost of all methods is similar, we compare them on the basis of the error in their solution at the final time $t=t_N$, evaluated as
$${\rm Sol\,\, err}=\sqrt{\frac{\|\mathbf{u}_N-u(\mathbf{x},t_N)\|^2+\|\mathbf{v}_N-v(\mathbf{x},t_N)\|}{\|u(\mathbf{x},t_N)\|^2+\|v(\mathbf{x},t_N)\|^2}}.$$
We investigate the convergence of the schemes by estimating the order of accuracy of the time integrator as
$${\rm Order}=\log_2\left(\frac{{\rm Sol\,\, err}_{{k-1}}}{{\rm Sol\,\, err}_k}\right),\qquad k=1,\ldots, 5,$$
where Sol err$_k$ denotes the error in the solution obtained with time step $\Delta t_k$. 

For fixed $\Delta x$ this estimate of the order of convergence is valid only for $k$ small enough, so that $\Delta t_k$ is large enough compared to $\Delta x$, and the leading term of error is proportional to $\Delta t_k$. Hence, in the following tables the symbol ``***'' means that the method has converged to a solution whose time component of the error is negligible compared to the spatial one.

The error in the local conservation laws is evaluated as
$$
{\rm Err}_\ell=\max_{m,n}\left(\{D_{\Delta x}\widetilde{F}_\ell+D_{\Delta t}\widetilde{G}_\ell\}\big\vert_{(x_m,t_n)}\right),\qquad \ell=1,2,
$$
where for all schemes functions $\widetilde{F}_1$, $\widetilde{G}_2$ and $\widetilde{F}_2$ are those given in \eqref{FGQ} and function $\widetilde{G_1}$ is defined in \eqref{EFG1} for the EF DVD method \eqref{EFNLSDVDvec}, or in \eqref{FGQ} for all other methods. 

Since the solution of this problem satisfies periodic boundary conditions, the global charge and Hamiltonian are conserved. We evaluate the error in these two invariants as
\begin{equation*}
{\rm Err}_{\mathcal{M}}=\max_{n}\left|\widetilde{\mathcal M}_n-\widetilde{\mathcal M}_0\right|,\qquad {\rm Err}_{\mathcal{H}}=\max_{n}\left|\widetilde{\mathcal H}_n-\widetilde{\mathcal H}_0\right|,
\end{equation*}
with $\widetilde{\mathcal{M}}_n$ and $\widetilde{\mathcal{H}}_n$ defined as in \eqref{totch} and \eqref{totH}, respectively.  
\begin{table}[t]
\caption{Error in local conservation laws}\label{tab:NLSCL}
\small
\begingroup
\setlength{\tabcolsep}{6pt} 
\renewcommand{\arraystretch}{1.12} 
\centerline{\begin{tabular}{|c||c|c||c|c||c|c||c|c|}
\hline
&  \multicolumn{2}{|c||}{Classic DVD \cite{Matsuo}} & \multicolumn{2}{|c||}{EF DVD} &\multicolumn{2}{|c||}{Classic AVF \cite{Quisp}} & \multicolumn{2}{|c|}{EF AVF \cite{Miya}}\\ 
\hline
$n$ & Err$_1$ & Err$_2$ & Err$_1$ & Err$_2$ & Err$_1$ & Err$_2$ & Err$_1$ & Err$_2$\\
\hline
0& 2.68e-08 & 8.05e-07 & 2.92e-08 & 7.51e-07 & 1.33e+00 & 6.89e-07 & 8.83e-01 & 7.91e-07\\
1& 3.35e-08 & 1.13e-06 & 3.69e-08 & 9.62e-07 & 2.67e-01 & 8.22e-07 & 1.82e-01 & 9.67e-07\\
2& 3.55e-08 & 8.94e-07 & 3.65e-08 & 9.20e-07 & 6.21e-02 & 1.00e-06 & 4.24e-02 & 9.79e-07\\
3& 4.09e-08 & 1.10e-06 & 4.41e-08 & 9.58e-07 & 1.52e-02 & 1.05e-06 & 1.04e-02 & 1.12e-06\\
4& 3.68e-08 & 1.14e-06 & 3.95e-08 & 1.31e-06 & 3.79e-03 & 1.14e-06 & 2.59e-03 & 1.13e-06\\
5& 4.27e-08 & 1.24e-06 & 5.01e-08 & 1.21e-06 & 9.48e-04 & 1.14e-06 & 6.47e-04 & 1.09e-06 \\
\hline
\end{tabular}}
\endgroup
\end{table}
\begin{table}[t]
\caption{Error in global invariants}\label{tab:NLSinv}
\small
\begingroup
\setlength{\tabcolsep}{6pt} 
\renewcommand{\arraystretch}{1.12}
\centerline{\begin{tabular}{|c||c|c||c|c||c|c||c|c|}
\hline
&  \multicolumn{2}{|c||}{Classic DVD \cite{Matsuo}} & \multicolumn{2}{|c||}{EF DVD} &\multicolumn{2}{|c||}{Classic AVF \cite{Quisp}} & \multicolumn{2}{|c|}{EF AVF \cite{Miya}}\\ 
\hline
$n$ & Err$_{\mathcal{M}}$ & Err$_{\mathcal{H}}$ & Err$_{\mathcal{M}}$ & Err$_{\mathcal{H}}$ & Err$_{\mathcal{M}}$ & Err$_{\mathcal{H}}$ & Err$_{\mathcal{M}}$ & Err$_{\mathcal{H}}$\\
\hline
0& 1.14e-13 & 2.27e-12 & 1.14e-13 & 2.56e-12 & 1.84e-02 & 2.44e-12 & 1.85e-02 & 2.05e-12\\
1& 3.69e-12 & 1.42e-13 & 1.88e-12 & 1.21e-13 & 4.75e-03 & 3.41e-12 & 4.77e-03 & 4.15e-12\\
2& 1.25e-12 & 9.95e-14 & 1.76e-12 & 1.07e-13 & 1.20e-03 & 2.67e-12 & 1.20e-03 & 2.61e-12\\
3& 8.88e-14 & 1.19e-12 & 7.11e-14 & 1.25e-12 & 3.00e-04 & 9.09e-13 & 3.00e-04 & 1.59e-12\\
4& 7.82e-14 & 1.31e-12 & 7.82e-14 & 1.14e-12 & 7.50e-05 & 1.48e-12 & 7.50e-05 & 1.36e-12 \\
5& 8.17e-14 & 1.31e-12 & 8.17e-14 & 1.08e-12 & 1.87e-05 & 1.71e-12 & 1.87e-05 & 1.14e-12 \\
\hline
\end{tabular}}
\endgroup
\end{table}

\begin{table}[t]
\caption{Order of convergence and error in solution}\label{tab:NLSerr}
\small
\centering
\begingroup
\setlength{\tabcolsep}{6pt} 
\renewcommand{\arraystretch}{1.12} 
\begin{tabular}{|c||c|c||c|c||c|c||c|c|}
\hline
&  \multicolumn{2}{|c||}{Classic DVD \cite{Matsuo}} & \multicolumn{2}{|c||}{EF DVD} &\multicolumn{2}{|c||}{Classic AVF \cite{Quisp}} & \multicolumn{2}{|c|}{EF AVF \cite{Miya}}\\ 
\hline
$n$ & Sol err & Order & Sol err & Order & Sol err & Order & Sol err & Order\\
\hline
0& 7.32e-02 &  & 8.79e-03 & & 2.01e-01 &  & 2.06e-01 & \\
1& 1.84e-02 & {1.99} & 2.15e-03 & {2.03} & 1.29e-01 & {0.64} & 1.22e-01 & {0.76}\\
2& 4.55e-03 & 2.01 & 4.89e-04 & 2.13 & 3.80e-02 & 1.76 & 3.56e-02 & 1.78\\
3& 1.09e-03 & 2.06 & 1.50e-04 & 1.70 & 9.71e-03 & 1.97 & 9.06e-03 & 1.97 \\
4& 2.58e-04 & 2.07 & 1.52e-04 & *** & 2.33e-03 & 2.06 & 2.16e-03 & 2.07\\
5 & 1.49e-04 & *** & 1.63e-04 & *** & 4.65e-04 & 2.32 &  4.19e-04 & 2.37 \\
\hline
\end{tabular}
\endgroup
\end{table}
In Table~\ref{tab:NLSCL} and Table~\ref{tab:NLSinv} we show the errors in the local conservation laws and in the global invariants, respectively. All methods preserve the energy conservation law and the global energy. The errors in the table are affected by accumulation of the round-offs and by the approximate solution of the nonlinear schemes by Newton's method and are roughly equal for all values of $\Delta t$. 

The solutions of classic AVF and EF AVF do not satisfy the conservation law of the charge, and do not conserve the global charge. The corresponding errors decrease with the time step and approach zero with the same rate of convergence of the schemes.

In Table~\ref{tab:NLSerr} we show the error in the solution and the estimated rate of convergence of the four considered schemes. All scheme converge with accuracy of the second order, until the space component of the error ($\sim 1.50\text{e-04}$) prevails. 

Figure~\ref{fig:NLSord} shows a logarithmic plot of the solution error against $\Delta t$ illustrating the rate of convergence of all the methods.

Compared to the AVF methods, the DVD methods not only conserve the charge locally and globally but also are more accurate for all values of $\Delta t$. 

The EF AVF method proposed in \cite{Miya} is not substantially more accurate than the classic AVF method. Instead, the EF DVD method introduced here is about one and two orders of magnitude more accurate than the classic DVD and the AVF methods, respectively. The new method is also the one that achieves the maximum possible accuracy in the solution (attainable with the chosen value of $\Delta x$) with the largest time step ($n=3$).

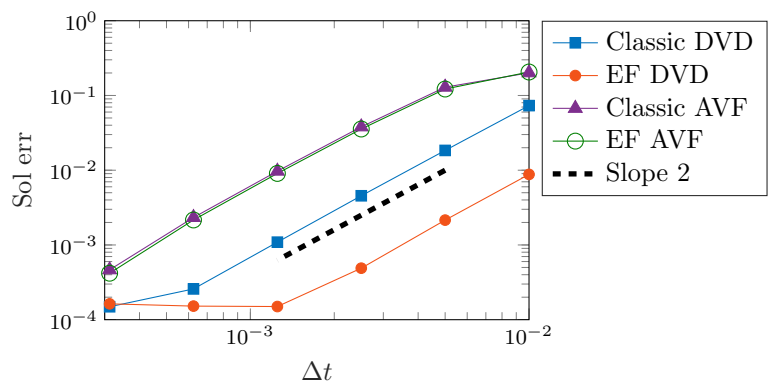
\begin{figure}[tbp]
\begin{center}
	\begin{tikzpicture}

\begin{axis}[
width=2.221in,
height=1.566in,
at={(0.in,0.481in)},
scale only axis,
xmode=log,
xmin=0.0003,
xmax=0.01,
xminorticks=true,
ymode=log,
ymin=0.0001,
ymax=1,
yminorticks=true,
xlabel style={font=\color{white!15!black}},
xlabel=$\Delta t$,
ylabel style={font=\color{white!15!black}},
ylabel=Sol err,
axis background/.style={fill=white},
legend style={legend cell align=left, align=left, draw=white!15!black,legend pos=outer north east}
]
\addplot [color=colorclassyblue, mark=square*, mark options={solid, colorclassyblue},mark size=2pt]
  table[row sep=crcr]{
0.01	0.0732\\
0.005	0.0184\\
0.0025	0.00455\\
0.00125	0.00109\\
0.000625	0.000258\\
0.0003125	0.000149\\
};
\addlegendentry{Classic DVD}

\addplot [color=colorclassyorange, mark=*, mark options={solid, colorclassyorange},mark size=2pt]
  table[row sep=crcr]{
0.01	0.00879\\
0.005	0.00215\\
0.0025	0.000489\\
0.00125	0.000150\\
0.000625	0.000152\\
0.0003125	0.000163\\
};
\addlegendentry{EF DVD}

\addplot [color=colorpurple, mark=triangle*, mark options={solid, colorpurple},mark size=3pt]
  table[row sep=crcr]{
0.01	0.201\\
0.005	0.129\\
0.0025	0.0380\\
0.00125	0.00971\\
0.000625	0.00233\\
0.0003125	0.000465\\
};
\addlegendentry{Classic AVF}

\addplot [color=colorimag, mark=o, mark options={solid, colorimag},mark size=3pt]
  table[row sep=crcr]{
0.01	0.206\\
0.005	0.122\\
0.0025	0.0356\\
0.00125	0.00906\\
0.000625	0.00216\\
0.0003125	0.000419\\
};
\addlegendentry{EF AVF}

\addplot [color=black, dashed, line width=2]
  table[row sep=crcr]{
0.005	0.01\\
0.0025	0.0025\\
0.00125	0.000625\\
};
\addlegendentry{Slope 2}
\end{axis}
\end{tikzpicture}
\end{center}
	\caption{Solution error for NLS breather (logarithmic scale on both axis)}
	\label{fig:NLSord}
\end{figure}

We conclude this section remarking that this problem has been solved in \cite{CFC} by a new second order exponentially fitted method that preserves the local conservation laws of charge and momentum of system \eqref{realNLS}. Comparing with the solution errors reported in \cite{CFC}, we observe that the EF DVD method introduced here is the most accurate and gives errors in the solution that are about ten times smaller.

\section{Conclusion}\label{concl}
In this paper we have introduced a new exponentially fitted version of the discrete variational derivative method in \cite{Matsuo} for complex Hamiltonian PDEs. We have proved that when applied to the nonlinear Schr\"odinger equation this method has local conservation laws of charge and energy that approximate the continuous ones. 

In a more general setting, for real or complex Hamiltonian PDEs, we have proved that the exponentially fitted AVF method introduced by Miyatake \cite{Miya} has the same local conservation law of the energy of the classic AVF method. However, neither the AVF method nor its exponentially fitted version conserve the charge. 

The four considered methods have been applied to a problem whose solution is a breather wave that oscillates with known frequency. The conservative properties of all schemes have been tested and the proposed EF DVD method is the one that performs better.

\subsection*{Acknowledgements} 
This work is supported by GNCS-INDAM project and by PRIN2017-MIUR project. The authors are members of the INdAM research group GNCS.
\bibliographystyle{acm}
\bibliography{bibfile}
\end{document}